\newcommand{\bdone}{{\boldsymbol{1}}}
\newcommand{\bdnot}{{\boldsymbol{0}}}
\newcommand{\tr}{\text{\rm{Tr}}}
\newcommand{\beq}{\begin{equation}}
\newcommand{\eeq}{\end{equation}}
\newcommand{\ba}{\begin{align*}}
\newcommand{\ea}{\end{align*}}
\newcommand{\norm}[1]{\lVert#1\rVert}
\DeclareMathOperator{\real}{Re}
\numberwithin{equation}{section}
\newtheorem{theorem}{Theorem}
\newtheorem{lemma}{Lemma}
\newtheorem*{definition}{Definition}
\theoremstyle{remark}
\newtheorem*{remarks}{Remarks}
\theoremstyle{remark}
\newtheorem{example}{Example}
\begin{document}


\title{Equivalence classes of block Jacobi matrices}


\author{Rostyslav Kozhan}
\email{rostysla@caltech.edu}
\address{California Institute of Technology\\
Department of Mathematics 253-37\\
Pasadena, CA 91125, USA}
\subjclass[2000]{Primary }
\date{\today}
\commby{}

\begin{abstract}
The paper contains two results on the equivalence classes of block Jacobi matrices: first, that the Jacobi matrix of type $2$ in the Nevai class has $A_n$ coefficients converging to $\bdone$, and second, that under an $L^1$-type condition on the Jacobi coefficients, equivalent Jacobi matrices of type $1$, $2$ and $3$ are pairwise asymptotic.
\end{abstract}
\maketitle

%
%
%

\begin{section}{Introduction and results}

A block Jacobi matrix is an infinite  matrix of the form
\begin{equation*}
J=\left(
\begin{array}{cccc}
B_1&A_1&\mathbf{0}&\\
A_1^* &B_2&A_2&\ddots\\
\mathbf{0}&A_2^* &B_3&\ddots\\
 &\ddots&\ddots&\ddots\end{array}\right),
\end{equation*}
where $A_n, B_n$ are $l\times l$ matrices with $A_n$ invertible. The sequences $A_n$ and $B_n$ are called Jacobi parameters of $J$.

Two block Jacobi matrices $J$ and $\widetilde{J}$ are called equivalent if their Jacobi parameters satisfy
\begin{equation}\label{eq2.1}
\widetilde{A}_n=\sigma_n^*  A_n \sigma_{n+1}, \quad
\widetilde{B}_n=\sigma_n^*  B_n \sigma_n
\end{equation}
for unitary $\sigma_n$'s with $\sigma_1=\bdone$. The definition comes from the fact that~\eqref{eq2.1} holds if and only if the (matrix-valued) spectral measures of $J$ and $\widetilde{J}$ coincide (see \cite{DPS} for the details).

Using the convention $p_{-1}=\bdnot$, $A_0=\bdone$, $p_0=\bdone$, the recurrence
\begin{equation}\label{recur}
x p_n(x)=p_{n+1}(x)A_{n+1}^* +p_n(x)B_{n+1}+p_{n-1}(x) A_n, \quad n=0,1,\ldots,
\end{equation}
allows one to define a sequence of matrix-valued polynomials, which 
turn out to be (right-) orthonormal with respect to the above mentioned spectral measure.

Inductively it is easy to see that
\begin{equation}\label{eq2.2}
\widetilde{p}_n(x)=p_n(x)\sigma_{n+1},
\end{equation}
where $\widetilde{p}_n$ are the orthonormal polynomials for $\widetilde{J}$.

We say that a block Jacobi matrix is of type $1$ if $A_n>0$ for all $n$,
of type $2$ if $A_1A_2\ldots A_n>0$ for all $n$, and of type $3$ if every $A_n$ is lower triangular with strictly positive elements on the diagonal. Each equivalence class of block Jacobi matrices contains exactly one matrix of type $1$, $2$ and $3$ (follows from the uniqueness of the polar and QR decompositions, see \cite{DPS} for the proof).

%
%
%

We say that $J$ is in the Nevai class if
\begin{equation*}
B_n\to\bdnot, \quad A_nA_n^* \to \bdone.
\end{equation*}
Note that this definition is invariant within the equivalence class of Jacobi matrices.

\begin{theorem}\label{thm1}
Assume $J$ belongs to the Nevai class. If $J$ is of type $1$, $2$ or $3$, then $A_n\to\bdone$ as $n\to\infty$.
\end{theorem}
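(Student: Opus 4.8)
The plan is to treat the three types separately; in all cases only the relation $A_nA_n^*\to\bdone$ from the Nevai hypothesis is used. Put $R_n:=(A_nA_n^*)^{1/2}>0$. Since the positive square root is continuous on positive-definite matrices and $R_n^2\to\bdone$, we get $R_n\to\bdone$. The two ``triangular'' cases are then immediate. If $J$ is of type $1$ then $A_n>0$, so $A_n=(A_n^2)^{1/2}=R_n\to\bdone$. If $J$ is of type $3$ then $A_n$ is the unique lower-triangular, positive-diagonal Cholesky factor of the positive-definite matrix $A_nA_n^*$; invoking uniqueness and continuity of this factorization (as with the polar/QR uniqueness quoted above) together with the fact that $\bdone$ is its own Cholesky factor, $A_nA_n^*\to\bdone$ forces $A_n\to\bdone$. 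So the substantive case is type $2$.

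Suppose $J$ is of type $2$, i.e.\ $P_m:=A_1A_2\cdots A_m>0$ for every $m\ge1$, where $P_0:=\bdone$. Then for each $n$,
\[
A_n=P_{n-1}^{-1}P_n=P_{n-1}^{-1/2}\bigl(P_{n-1}^{-1/2}P_nP_{n-1}^{-1/2}\bigr)P_{n-1}^{1/2},
\]
so $A_n$ is similar to the positive-definite matrix $P_{n-1}^{-1/2}P_nP_{n-1}^{-1/2}$; hence all $l$ eigenvalues $\lambda_1(A_n)\ge\cdots\ge\lambda_l(A_n)$ of $A_n$ are strictly positive reals. This spectral constraint, coming purely from the type-$2$ normalization, is exactly the missing ingredient: $A_nA_n^*\to\bdone$ by itself controls only the singular values of $A_n$ and says nothing about its unitary polar factor.

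Now I would combine the two pieces of information. Let $\sigma_1(A_n)\ge\cdots\ge\sigma_l(A_n)$ be the singular values; from $A_nA_n^*\to\bdone$ (whose eigenvalues are the $\sigma_j(A_n)^2$) we get $\sigma_j(A_n)\to1$ for all $j$. By Weyl's inequalities $\lambda_1(A_n)\le\sigma_1(A_n)$, and using $\prod_i\lambda_i(A_n)=\det A_n=\prod_i\sigma_i(A_n)$ together with $\prod_{i<l}\lambda_i(A_n)\le\prod_{i<l}\sigma_i(A_n)$ also $\lambda_l(A_n)\ge\sigma_l(A_n)$; thus $\sigma_l(A_n)\le\lambda_j(A_n)\le\sigma_1(A_n)$ and $\lambda_j(A_n)\to1$ for every $j$. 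Finally, take a Schur decomposition $A_n=V_nT_nV_n^*$ with $V_n$ unitary and $T_n$ upper triangular with diagonal entries the $\lambda_j(A_n)$, and compare Hilbert--Schmidt norms: writing $N_n$ for the strictly-upper-triangular part of $T_n$,
\[
\sum_{j}\sigma_j(A_n)^2=\tr(A_nA_n^*)=\tr(T_nT_n^*)=\sum_j\lambda_j(A_n)^2+\norm{N_n}_{\mathrm{HS}}^2,
\]
so $\norm{N_n}_{\mathrm{HS}}^2=\sum_j\bigl(\sigma_j(A_n)^2-\lambda_j(A_n)^2\bigr)\to0$. Hence $T_n\to\bdone$, and $\norm{A_n-\bdone}=\norm{V_n(T_n-\bdone)V_n^*}=\norm{T_n-\bdone}\to0$.

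The main obstacle is the type-$2$ case, and within it the key move is the factorization exhibiting each $A_n$ as similar to a positive-definite matrix; once that spectral positivity is available, the Weyl bound $\sigma_{\min}\le\lambda_j\le\sigma_{\max}$ and the Schur/Hilbert--Schmidt identity are routine. Types $1$ and $3$ are soft, relying only on uniqueness and continuity of standard matrix factorizations.
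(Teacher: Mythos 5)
Your proof is correct, but for the substantive type-$2$ case it takes a genuinely different route from the paper. The paper deduces type $2$ from the (already known) type-$1$ case: writing $A_n=\sigma_n^*\widehat{A}_n\sigma_{n+1}$ with $\widehat{A}_n$ the type-$1$ parameters, it identifies $\sigma_{n+1}$ as the adjoint of the unitary polar factor $\phi(\widehat{Q}_n)$ of the product $\widehat{Q}_n=\widehat{A}_1\cdots\widehat{A}_n$, and then invokes Li's perturbation bound (Lemma~\ref{li}) to get $\sigma_{n+1}-\sigma_n\to\bdnot$, hence $\sigma_n^*\sigma_{n+1}\to\bdone$. You instead argue intrinsically: the type-$2$ normalization $P_n=A_1\cdots A_n>0$ forces $A_n=P_{n-1}^{-1}P_n$ to be similar to the positive-definite matrix $P_{n-1}^{-1/2}P_nP_{n-1}^{-1/2}$, hence to have positive real spectrum; the Weyl bounds $\sigma_l\le\lambda_j\le\sigma_1$ (the paper's \eqref{e2.5}--\eqref{e2.6}, which it only deploys later, in Lemma~\ref{lemma} for Theorem~2) then give $\lambda_j(A_n)\to1$, and the Schur-form trace identity $\norm{N_n}_{HS}^2=\tr(A_nA_n^*)-\sum_j\lambda_j(A_n)^2\to0$ kills the non-normal part. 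Your argument is more elementary --- it needs neither Li's lemma nor the prior resolution of the type-$1$ case --- and it proves the more general statement that $A_nA_n^*\to\bdone$ plus positive real eigenvalues forces $A_n\to\bdone$, which subsumes types $1$ and $3$ as well (their eigenvalues are automatically positive), so your separate soft treatments of those cases are not even needed. What the paper's detour buys is the machinery ($\sigma_{n+1}=\phi(\widehat{Q}_n)^*$ and Li's quantitative bound) that it reuses verbatim to prove the summability statement in Theorem~\ref{thm2}, where a mere limit statement would not suffice.
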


This result was proven in \cite{DPS} for the type $1$ and $3$ cases, and was left open for type $2$. It is proven here in Section $2$.

Note that the essence of Theorem~\ref{thm1} is to show that $\sigma_n^* \sigma_{n+1}\to\bdone$, where $\sigma_n$'s are the unitary coefficients from~\eqref{eq2.1} for $J$, $\widetilde{J}$ of type $1$, $2$ or $3$. Looking at~\eqref{eq2.2}, it is clear that any result on the asymptotics of $p_n$ (see e.g.~\cite{AN},~\cite{Kozhan},~\cite{Kozhan2}) would involve the limit $\lim_{n\to\infty} \sigma_n$. This explains the  need for the following definition.

\begin{definition}
Two equivalent matrices $J$ and $\widetilde{J}$ with~\eqref{eq2.1} are called asymptotic to each other if the limit $\lim_{n\to\infty} \sigma_n$ exists.
\end{definition}

Clearly this is an equivalence relation on the set of equivalent block Jacobi matrices. Thus, establishing Szeg\H{o} asymptotics (which simply means $\lim_{n\to\infty} z^n p_n(z+z^{-1})$ exists) for any block Jacobi matrix immediately implies the corresponding asymptotics for any of the Jacobi matrices asymptotic to the original one.

\begin{theorem}\label{thm2}
Assume
\begin{equation}\label{eq2.6}
\sum_{n=1}^\infty \left[ \norm{\bdone-A_n A_n^*}+ \norm{B_n}\right]<\infty.
\end{equation}
Then the corresponding Jacobi matrices of type $1$, $2$ and $3$ are pairwise asymptotic.
\end{theorem}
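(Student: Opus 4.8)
The plan is to show that for each of the three canonical types, the unitary sequence $\sigma_n$ relating the type-$1$ matrix to the given type has a limit; since being asymptotic is an equivalence relation (and the relation of being equivalent is symmetric and transitive), it then suffices to fix one representative — say the type-$1$ matrix $J^{(1)}$ with parameters $A_n^{(1)}>0$, $B_n^{(1)}$ — and prove separately that the type-$1$ matrix is asymptotic to the type-$2$ matrix and to the type-$3$ matrix. In each case the recipe is: write down the explicit formula for $\sigma_n$ coming from the relevant matrix decomposition, and then show $\sum_n \norm{\sigma_{n+1}-\sigma_n}<\infty$, which forces convergence since the unitary group is complete. The summability hypothesis \eqref{eq2.6} is what drives everything; note first that \eqref{eq2.6} is invariant under the equivalence relation (since $\norm{\bdone-\widetilde A_n\widetilde A_n^*}=\norm{\sigma_n^*(\bdone-A_nA_n^*)\sigma_n}=\norm{\bdone-A_nA_n^*}$ and similarly $\norm{\widetilde B_n}=\norm{B_n}$), so it holds simultaneously for all three types, and in particular $A_nA_n^*\to\bdone$, i.e. all three matrices lie in the Nevai class and Theorem \ref{thm1} applies to give $A_n\to\bdone$ for each type.

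For the type-$1$ vs.\ type-$2$ comparison, start from a type-$2$ matrix with parameters $\widetilde A_n$, so that $\widetilde A_1\cdots\widetilde A_n>0$. From \eqref{eq2.1}, $\widetilde A_1\cdots\widetilde A_n=(\sigma_1^*A_1\sigma_2)(\sigma_2^*A_2\sigma_3)\cdots(\sigma_n^*A_n\sigma_{n+1})=A_1^{(1)}\cdots A_n^{(1)}\,\sigma_{n+1}$ using $\sigma_1=\bdone$, where now $A_k^{(1)}>0$ is the type-$1$ parameter. Hence $\sigma_{n+1}$ is exactly the unitary factor in the polar decomposition of $P_n:=A_1^{(1)}\cdots A_n^{(1)}$: writing $P_n=U_n\,|P_n^*|$ would be wrong orientation, so instead $P_n^*=\sigma_{n+1}^*(\widetilde A_1\cdots\widetilde A_n)$ with the second factor positive, i.e. $\sigma_{n+1}^*$ is the unitary in the polar decomposition of $P_n^*$ (and $\sigma_{n+1}$ the one for $P_n$ read as $P_n=(\text{positive})\sigma_{n+1}$ — one should fix the convention carefully in the writeup). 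To control $\sigma_{n+1}-\sigma_n$, write $P_{n+1}=P_n A_{n+1}^{(1)}$ and use the fact that $A_{n+1}^{(1)}=\bdone+E_{n+1}$ with $\sum\norm{E_{n+1}}<\infty$ (from $A_n^{(1)}\to\bdone$ together with $\norm{\bdone-A_n^{(1)}(A_n^{(1)})^*}\le$ summable; since $A_n^{(1)}>0$, $\norm{\bdone-A_n^{(1)}}$ is comparable to $\norm{\bdone-(A_n^{(1)})^2}$ once $A_n^{(1)}$ is close to $\bdone$, giving $\sum\norm{\bdone-A_n^{(1)}}<\infty$). A standard perturbation estimate for the unitary part of the polar decomposition — if $P$ and $PQ$ are invertible with $Q$ close to the identity, the unitary factors differ by $O(\norm{\bdone-Q})$ uniformly in $P$ — then yields $\norm{\sigma_{n+1}-\sigma_n}\le C\norm{E_{n+1}}$, which is summable.

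For the type-$1$ vs.\ type-$3$ comparison the argument is the same in spirit with the polar decomposition replaced by the QR decomposition. If $\widetilde A_n=L_n$ is lower triangular with positive diagonal, then from \eqref{eq2.1}, $A_n^{(1)}=\sigma_n L_n\sigma_{n+1}^*$, and one needs to extract $\sigma_{n+1}$ from the relation between consecutive data. The cleanest route: $A_n^{(1)}\sigma_{n+1}=\sigma_n L_n$, and since $L_n$ is lower triangular positive, $\sigma_n L_n$ is (for each fixed $n$) the QR-type factorization of $A_n^{(1)}\sigma_{n+1}$; iterating does not telescope as neatly as the polar case, so instead I would directly estimate $\sigma_{n+1}\sigma_n^{-1}$ from $\sigma_{n+1}=(A_n^{(1)})^{-1}\sigma_n L_n$, i.e. $\sigma_{n+1}\sigma_n^* = (A_n^{(1)})^{-1}\sigma_n L_n\sigma_n^*$, and show the right side tends to $\bdone$ fast enough — here one uses $A_n^{(1)}\to\bdone$ and $L_n\to\bdone$ (the latter because $L_n L_n^*=\sigma_n^*(\bdone-(\bdone-A_n^{(1)}(A_n^{(1)})^*))\sigma_n$ forces $L_nL_n^*\to\bdone$, and a lower-triangular positive matrix with $L_nL_n^*$ summably close to $\bdone$ is itself summably close to $\bdone$, by looking at the diagonal and then the off-diagonal entries row by row). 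The main obstacle I anticipate is exactly this last point in the type-$3$ case: unlike the polar factor, the QR factor's dependence on the input is not globally Lipschitz in a way that telescopes, so one must work entrywise and track how the summability $\sum\norm{\bdone-L_nL_n^*}<\infty$ propagates through the Cholesky-type map $M\mapsto L$ with $LL^*=M$ to give $\sum\norm{\bdone-L_n}<\infty$; once that is in hand, $\sum\norm{\sigma_{n+1}-\sigma_n}<\infty$ follows and the three matrices are pairwise asymptotic.
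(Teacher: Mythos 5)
Your proposal is correct in outline, and for the type-$1$/type-$2$ half it is essentially the paper's argument: identify $\sigma_{n+1}$ (up to adjoint) with the unitary polar factor of the partial products $P_n=A_1^{(1)}\cdots A_n^{(1)}$ and telescope. The one thing you should not wave off as ``standard'' is precisely the estimate that makes this work: that the unitary factors of $P$ and $PQ$ differ by $O(\norm{\bdone-Q})$ \emph{uniformly in} $P$. Additive perturbation bounds for the polar factor degrade with the smallest singular value of $P$; it is only the multiplicative (relative) form that is uniform, and that is Li's theorem, which the paper states as Lemma~\ref{li} and proves in an appendix. So the step is correct, but it is the key technical ingredient and needs a proof or a precise citation, not a ``standard'' label. (Under \eqref{eq2.6} one could in fact also salvage an additive bound, since $\sum\norm{\bdone-\widehat A_n}<\infty$ forces $P_n$ to converge to an invertible limit, but that is not the argument you wrote.)

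For the type-$1$/type-$3$ half you genuinely diverge from the paper. The paper proves a general inequality (Lemma~\ref{lm3}), $\norm{\bdone-A}\le c\,\norm{\bdone-|A|}$ for any matrix with real positive eigenvalues, via a Weyl-type comparison of singular values with eigenvalue moduli (Lemma~\ref{lemma}); applied to the triangular $A_n$, whose eigenvalues are its positive diagonal entries, together with $|A_n|=\sigma_n^*\widehat A_n\sigma_n$, this gives $\sum\norm{\bdone-A_n}<\infty$ and then the same telescoping bound on $\norm{\bdone-\sigma_n^*\sigma_{n+1}}$ that you derive. You instead push the summability of $\norm{\bdone-L_nL_n^*}$ through the Cholesky map $M\mapsto L$. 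That route does close: the Cholesky factorization is analytic, hence locally Lipschitz, near $\bdone$ (or one runs the row-by-row entrywise induction you sketch, which is exactly the Cholesky algorithm), and your final estimate $\norm{\sigma_{n+1}\sigma_n^*-\bdone}\le\norm{(A_n^{(1)})^{-1}}\bigl(\norm{L_n-\bdone}+\norm{\bdone-A_n^{(1)}}\bigr)$ is summable. Your version is more elementary and self-contained for triangular matrices; the paper's lemma buys generality, since it applies verbatim to any representative whose $A_n$ eventually have real eigenvalues (this is Remark~2 after the theorem), which the Cholesky argument cannot reach.
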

\begin{remarks}
1. The condition~\eqref{eq2.6} doesn't depend on the choice of the representative of the equivalence class of equivalent matrices.
\smallskip

2. The proof also shows that any Jacobi matrix, for which eventually each $A_n$ has real eigenvalues, is also asymptotic to type $1$, $2$, $3$.

\smallskip

3. An example of an equivalence class of block Jacobi matrices that fails~\eqref{eq2.6} and that has type $1$ and type $2$ nonasymptotic to each other can be found at the end of Section $2$.
\end{remarks}

\bigskip

\noindent\textbf{Acknowledgements.} The author would like to thank Prof Barry Simon for helpful comments.

\end{section}
\begin{section}{Proofs of the results}

We will be using the following lemma from \cite{Li}. For self-containment purposes we give a proof of it in the Appendix.

\begin{lemma}[Li \cite{Li}]\label{li} Let $\phi$ be the map that takes any invertible matrix $T$ to the unitary factor $U$ in its polar decomposition $T=|T|U$, where $|T|=\sqrt{TT^*}$. Then for any invertible $l\times l$ matrices $B, D$ the following holds
\begin{equation*}
||\phi(B)-\phi(BD)||_{HS}\le \sqrt{||\bdone-D^{-1}||_{HS}^2+||\bdone-D||_{HS}^2},
\end{equation*}
where $||\cdot||_{HS}$ is the Hilbert--Schmidt norm.
\end{lemma}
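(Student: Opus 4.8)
The plan is to strip off the unitary ambiguity so that only the ``size'' factor $D$ is left, then to convert the Hilbert--Schmidt norms of unitaries into traces, and finally to reduce to an elementary scalar inequality.

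First I would use that the unitary factor is the same in the two polar decompositions $T=|T|U$ and $T=U\sqrt{T^*T}$, so that $B=\phi(B)\sqrt{B^*B}$; set $R:=\sqrt{B^*B}>0$. Since $\phi(WT)=W\phi(T)$ for every unitary $W$, this gives $\phi(BD)=\phi\big(\phi(B)RD\big)=\phi(B)\,\phi(RD)$, hence $\phi(B)-\phi(BD)=\phi(B)\big(\bdone-\phi(RD)\big)$, and — the Hilbert--Schmidt norm being unchanged under left multiplication by a unitary —
\[
\norm{\phi(B)-\phi(BD)}_{HS}=\norm{\bdone-\phi(RD)}_{HS}.
\]
Thus it suffices to prove that for every $R>0$ and every invertible $D$,
\[
\norm{\bdone-\phi(RD)}_{HS}^2\ \le\ \norm{\bdone-D^{-1}}_{HS}^2+\norm{\bdone-D}_{HS}^2 .
\]

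Next I would expand both sides: for a unitary $U$, $\norm{\bdone-U}_{HS}^2=2l-2\real\tr U$, and for general $X$, $\norm{\bdone-X}_{HS}^2=l-2\real\tr X+\tr(X^*X)$, while $\norm{D^{-1}}_{HS}^2=\tr\big((DD^*)^{-1}\big)$. Substituting, the displayed inequality becomes the trace inequality
\[
\real\tr\phi(RD)\ \ge\ \real\tr\!\big(D+D^{-1}\big)-\tfrac12\tr\!\big(D^*D+(DD^*)^{-1}\big).
\]
To prove this in the model case $R=\bdone$, write a singular value decomposition $D=W_1\Sigma W_2$ with $W_1,W_2$ unitary and $\Sigma=\operatorname{diag}(s_1,\dots,s_l)>0$; then $|D|=W_1\Sigma W_1^*$, so $\phi(D)=W_1W_2$, and, setting $Q:=W_2W_1$ and using cyclicity of the trace, $\real\tr\phi(D)=\real\tr Q$, $\real\tr(D+D^{-1})=\real\tr\!\big((\Sigma+\Sigma^{-1})Q\big)$ and $\tr(D^*D+(DD^*)^{-1})=\tr(\Sigma^2+\Sigma^{-2})$, so the claim reduces to $\real\tr\!\big((\Sigma+\Sigma^{-1}-\bdone)Q\big)\le\tfrac12\tr(\Sigma^2+\Sigma^{-2})$. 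Since $\Sigma+\Sigma^{-1}-\bdone=\operatorname{diag}(s_i+s_i^{-1}-1)$ is positive and $|Q_{ii}|\le1$, the left side is $\le\sum_i(s_i+s_i^{-1}-1)$, and I would finish with the elementary inequality $2(s+s^{-1}-1)\le s^2+s^{-2}$ for $s>0$, which with $t:=s+s^{-1}\ge2$ is just $t(t-2)\ge0$.

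The hard part will be extending this trace inequality to an arbitrary positive factor $R$, where $\phi(RD)$ no longer has such a transparent form. My intended attack is to combine the variational characterization of $\phi$ (for invertible $T$, $\phi(T)$ is the unitary nearest to $T$ in Hilbert--Schmidt norm, equivalently the maximizer of $\real\tr(V^*T)$ over unitaries $V$) with the identity $\real\tr\phi(RD)=\tr\!\big(|RD|^{-1}\cdot\tfrac12(RD+(RD)^*)\big)$, bounding the right-hand side by the singular values of $D$; alternatively one might try to establish a monotonicity or convexity property of $R\mapsto\real\tr\phi(RD)$ on the positive cone. Once the trace inequality is known for all $R>0$, the two reductions above yield the lemma.
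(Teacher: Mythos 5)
Your two reductions are correct and worth keeping: the identity $B=\phi(B)\sqrt{B^*B}$ together with the equivariance $\phi(WT)=W\phi(T)$ for unitary $W$ does reduce the lemma to the case of a positive left factor $R=\sqrt{B^*B}$, and your treatment of the model case $R=\bdone$ (via the SVD of $D$, the bound $|Q_{ii}|\le 1$, and the scalar inequality $2(s+s^{-1}-1)\le s^2+s^{-2}$) is complete and correct. But the argument stops exactly where the real difficulty begins. Passing from general invertible $B$ to positive $R$ does not trivialize anything: the case $B>0$ is essentially the full statement, since the bound must hold uniformly over all $R>0$ while depending only on $D$, and $\phi(RD)=|RD|^{-1}RD$ couples $R$ and $D$ in a way that none of your proposed devices is shown to untangle. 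The variational characterization gives an \emph{upper} bound $\real\tr(V^*RD)\le\tr|RD|$ for unitary $V$, whereas you need a \emph{lower} bound on $\real\tr\phi(RD)$; and no monotonicity or convexity of $R\mapsto\real\tr\phi(RD)$ on the positive cone is established (nor is one obviously true). So the trace inequality for general $R>0$ --- which is equivalent to the lemma itself --- remains unproven, and this is a genuine gap rather than a routine omission.

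For comparison, the paper's proof (Li's original argument) attacks the general case directly. It takes singular value decompositions $B=U\Sigma V^*$ and $BD=\widetilde U\widetilde\Sigma\widetilde V^*$, observes that $X=\widetilde U^*U-\widetilde V^*V$ satisfies $\norm{X}_{HS}=\norm{\phi(B)-\phi(BD)}_{HS}$, and shows that $X$ solves the Sylvester equation $X\Sigma+\widetilde\Sigma X=\widetilde\Sigma E-F\Sigma$ with $E=\widetilde V^*(D^{-1}-\bdone)V$ and $F=\widetilde V^*(\bdone-D^*)V$; solving entrywise and applying the Schwarz inequality gives $|x_{ij}|^2\le|e_{ij}|^2+|f_{ij}|^2$, using only the positivity of the singular values $\sigma_{jj},\widetilde\sigma_{ii}$. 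Some mechanism of this kind --- one that extracts $\bdone-D$ and $\bdone-D^{-1}$ from the difference $\phi(B)-\phi(BD)$ while discarding all dependence on $B$ (equivalently on $R$) --- is the heart of the lemma, and it is the piece missing from your proposal. If you want to salvage your approach, you would need to actually prove the inequality $\real\tr\phi(RD)\ge\real\tr(D+D^{-1})-\tfrac12\tr\bigl(D^*D+(DD^*)^{-1}\bigr)$ for arbitrary $R>0$, and the most plausible route to that is essentially the Sylvester-equation computation above.
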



\medskip

\begin{proof}[Proof of Theorem~\ref{thm1}]
For type $1$ and $3$, the statement is proven in~\cite{DPS}.

Assume $J$ is of type $2$. Denote by $\widehat{J}$ the type $1$ Jacobi matrix equivalent to $J$. Denote its Jacobi parameters by $\widehat{A}_n, \widehat{B}_n$, and let
\begin{equation}\label{eq2.5}
{A}_n=\sigma_n^*  \widehat{A}_n \sigma_{n+1}
\end{equation}
for some unitaries $\sigma_n$. Since $\widehat{A}_n\to\bdone$, we get ${A}_n=\sigma_n^*  \widehat{A}_n \sigma_{n+1}=\left(\sigma_n^*  \widehat{A}_n \sigma_{n}\right)\sigma_n^* \sigma_{n+1}$ converges to $\bdone$ if and only if $\lim_{n\to\infty} \sigma_n^* \sigma_{n+1}=\bdone$.


Denote $Q_n=A_1\ldots A_n$, which is a positive-definite matrix. Note that $\widehat{Q}_n=\widehat{A}_1\ldots \widehat{A}_n=A_1\ldots A_n\sigma_{n+1}^*=Q_n\sigma_{n+1}^*$, so $Q_n=|\widehat{Q}_n|$ and $\sigma_{n+1}=\phi(\widehat{Q}_n)^*$. Here $\phi$ is the same as in Lemma~\ref{li}.

Now, $\widehat{A}_{n+1}\to\bdone$ together with Lemma~\ref{li} implies that $\phi(\widehat{Q}_{n+1})-\phi(\widehat{Q}_n)=\phi(\widehat{Q}_n\widehat{A}_{n+1})-\phi(\widehat{Q}_{n})\to \bdnot$. Thus, $\sigma_{n+1}-\sigma_n\to\bdnot$, and $\lim_{n\to\infty} \sigma_n^* \sigma_{n+1}=\bdone$.
\end{proof}

For the type $3$ case of Theorem \ref{thm2}, we will need the following lemma. Recall that the singular values of a matrix $A$ are defined to be the eigenvalues of $|A|$.

\begin{lemma}\label{lemma} There exists a constant $c$ such that for all $l\times l$ matrices $A$
\begin{equation}\label{ineq}
\sum_{j=1}^l (\sigma_j-|\lambda_j|) \le c\sum_{j=1}^l (1-\sigma_j)^2,
\end{equation}
where $\{\lambda_j\}_{j=1}^l$ and $\{\sigma_j\}_{j=1}^l$ are the eigenvalues and singular values of $A$, ordered by $|\lambda_1|\ge\ldots \ge |\lambda_l|$, $\sigma_1\ge\ldots\ge \sigma_l\ge0$, where $c$ depends on $l$ only.
\end{lemma}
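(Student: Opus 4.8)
The plan is to bound $\sum_j(\sigma_j-|\lambda_j|)$ by relating it to the \emph{squared Frobenius departure from normality} of $A$. Write the Schur decomposition $A=Q(D+N_T)Q^*$, where $Q$ is unitary, $D=\operatorname{diag}(\lambda_1,\dots,\lambda_l)$ and $N_T$ is strictly upper triangular, and set $N^2:=\|N_T\|_{HS}^2=\|A\|_{HS}^2-\sum_j|\lambda_j|^2\ge 0$. Expanding the squares $\sigma_j^2=(1-(1-\sigma_j))^2$, $|\lambda_j|^2=(1-(1-|\lambda_j|))^2$ and using $\|A\|_{HS}^2=\sum_j\sigma_j^2$, one gets the exact identity
\[
2\sum_{j=1}^l(\sigma_j-|\lambda_j|)=N^2+\sum_{j=1}^l(1-|\lambda_j|)^2-\sum_{j=1}^l(1-\sigma_j)^2 .
\]
Writing $S:=\sum_j(1-\sigma_j)^2$ for the quantity on the right of the claimed inequality and $E_\lambda:=\sum_j(1-|\lambda_j|)^2$, it therefore suffices to bound $N^2$ and $E_\lambda$ each by a constant multiple of $S$.

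I would first dispose of the range $S\ge\varepsilon_0$, where $\varepsilon_0=\varepsilon_0(l)>0$ is a threshold fixed below, by the crude estimate $\sum_j(\sigma_j-|\lambda_j|)\le\sum_j\sigma_j\le l+\sqrt{lS}$ (Cauchy--Schwarz applied to $\sum_j(1-\sigma_j)$), which is at most $\bigl(l\varepsilon_0^{-1}+\sqrt{l\varepsilon_0^{-1}}\bigr)S$ once $S\ge\varepsilon_0$. When $S<\varepsilon_0$, Weyl's inequalities give $|\lambda_1|\le\sigma_1$ and, dividing $\prod_j|\lambda_j|=\prod_j\sigma_j=|\det A|$ by the Weyl product bound $\prod_{j<l}|\lambda_j|\le\prod_{j<l}\sigma_j$, also $|\lambda_l|\ge\sigma_l$; hence every $|\lambda_j|$ lies in $[\sigma_l,\sigma_1]$, so $(1-|\lambda_j|)^2\le(1-\sigma_1)^2+(1-\sigma_l)^2\le S$ and $E_\lambda\le lS$. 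The same pinching yields $N^2=\sum_j\sigma_j^2-\sum_j|\lambda_j|^2\le l(\sigma_1^2-\sigma_l^2)$, so choosing $\varepsilon_0$ small (of order $l^{-2}$) forces $\sigma_j,|\lambda_j|\in(\tfrac34,\tfrac54)$ for all $j$ and $N\le\tfrac14$.

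It remains to bound $N^2$ by $O(S)$ in this case; this is the crux, and the main obstacle. A termwise estimate only gives $N^2=O(\sqrt S)$ because $\sum_j(\sigma_j-1)$ can have size $\sqrt S$, so the needed cancellation must come from the triangular structure. Apply the strictly-lower-triangular projection $\pi_{<}$ to $T^*T-I=(D^*D-I)+D^*N_T+N_T^*D+N_T^*N_T$: the first summand is diagonal and the second strictly upper triangular, so $\pi_<(T^*T-I)=N_T^*D+\pi_<(N_T^*N_T)$. Now $\|T^*T-I\|_{HS}=\|A^*A-I\|_{HS}=\bigl(\sum_j(\sigma_j^2-1)^2\bigr)^{1/2}\le\tfrac94\sqrt S$ (each $\sigma_j+1<\tfrac94$), while $\|N_T^*D\|_{HS}\ge\|N_T\|_{HS}/\|D^{-1}\|_{\mathrm{op}}\ge\tfrac34 N$ (as $N_T^*=(N_T^*D)D^{-1}$ and $\|D^{-1}\|_{\mathrm{op}}=1/|\lambda_l|\le\tfrac43$) and $\|\pi_<(N_T^*N_T)\|_{HS}\le\|N_T\|_{HS}^2=N^2\le\tfrac14 N$. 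Since $\pi_<$ does not increase the Hilbert--Schmidt norm, the triangle inequality gives $\tfrac12 N\le\tfrac94\sqrt S$, i.e. $N^2\le\tfrac{81}{4}S$. Plugging this and $E_\lambda\le lS$ into the identity gives $\sum_j(\sigma_j-|\lambda_j|)\le\tfrac12\bigl(\tfrac{81}{4}+l\bigr)S$ when $S<\varepsilon_0$, and one takes $c$ equal to the larger of the two constants produced. (Note that $S<\varepsilon_0$ already forces $\sigma_l\ge\tfrac34$, hence $A$ invertible, so the manipulations with $D^{-1}$ and Weyl's product bound are legitimate; for $S\ge\varepsilon_0$ only the crude bound, valid for every $A$, is used.) Everything except the quadratic control $N^2=O(S)$ is a routine estimate.
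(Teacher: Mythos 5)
Your proof is correct, but it takes a genuinely different route from the paper's. The paper works only with the two scalar lists $\{\sigma_j\}$ and $\{|\lambda_j|\}$: after reducing to a neighborhood of the unitary group (the complement being handled by a soft largeness-plus-compactness appeal), it writes $\sigma_j=1+\varepsilon_j$, $|\lambda_j|=1+\delta_j$, solves for $\delta_l$ from the determinant identity $\prod_j|\lambda_j|=\prod_j\sigma_j$, and observes that in the resulting expansion of $\sum_j(\varepsilon_j-\delta_j)$ all first-order terms cancel, so the whole expression is controlled by $\sum_j\varepsilon_j^2$ via $|\delta_j|\le|\varepsilon_1|+|\varepsilon_l|$. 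You instead exploit the matrix structure: your exact identity $2\sum_j(\sigma_j-|\lambda_j|)=N^2+\sum_j(1-|\lambda_j|)^2-\sum_j(1-\sigma_j)^2$ isolates the departure from normality $N^2=\norm{A}_{HS}^2-\sum_j|\lambda_j|^2$ as the real source of the gap between singular values and eigenvalue moduli, and the crux becomes the quadratic bound $N^2=O(S)$ near the unitary group, which you obtain by projecting $T^*T-\bdone$ onto its strictly lower-triangular part in the Schur basis. I checked the details: the identity is right, the triangular bookkeeping ($D^*N_T$ upper, $N_T^*D$ lower, only $N_T^*N_T$ mixing) is right, and the self-improving estimate $\tfrac34 N\le\tfrac94\sqrt S+\tfrac14 N$ legitimately uses $N\le\tfrac14$ and $|\lambda_l|\ge\sigma_l\ge\tfrac34$, both guaranteed by the choice of $\varepsilon_0$. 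The common ingredient of the two proofs is Weyl's pinching $\sigma_l\le|\lambda_j|\le\sigma_1$. The paper's argument is more elementary and shorter; yours is more quantitative (you make the large-$S$ regime explicit rather than invoking compactness, and all constants are traceable), and the intermediate statement that $\norm{N_T}_{HS}^2\le C\sum_j(1-\sigma_j)^2$ for $A$ near the unitary group is a clean bound of independent interest that explains \emph{why} the left side of \eqref{ineq} is quadratically small: for normal $A$ near a unitary it would follow already from the pinching, and non-normality enters only through $N^2$.
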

\begin{proof}
For  sufficiently large matrices $A$ the inequality is clear. It also holds for any compact set on which the right-hand side of \eqref{ineq} does not vanish. Therefore, we only need to worry about neighborhoods of matrices with $\sum_{j=1}^l (1-\sigma_j)^2=0$, that is, unitary matrices.

Consider any matrix $A$ within distance $1/2$ from the unitary group. Let $U=\phi(A)$ be the unitary factor in the polar decomposition of $A$. Since $\phi(A)$ is always the closest unitary to $A$ (see e.g. ~\cite{Bhatia}), we get
\begin{equation*}
\norm{A-U}\le 1/2 \quad \mbox{and} \quad \norm{|A|-\bdone}\le 1/2.
\end{equation*}
The second inequality immediately gives
$|\sigma_j-1|\le 1/2$, which in turn implies $\left| |\lambda_j|-1\right|\le 1/2$ by \eqref{e2.5} below.
The following basic facts are well-known (see e.g.~\cite{Weyl}):
\begin{gather}
\sigma_1\ge |\lambda_j| \ge \sigma_l \mbox{ for any }j ;\label{e2.5} \\
\prod_{j=1}^l |\lambda_j| = \prod_{j=1}^l \sigma_j .\label{e2.6}
\end{gather}
Let $\varepsilon_j=\sigma_j-1$, $\delta_j=|\lambda_j|-1$. Then from~\eqref{e2.6},
$$\delta_l=\frac{\prod_{j=1}^l \sigma_j}{\prod_{j=1}^{l-1} |\lambda_j|}-1=\frac{\prod_{j=1}^l (1+\varepsilon_j)-\prod_{j=1}^{l-1} (1+\delta_j)}{\prod_{j=1}^{l-1} (1+\delta_j)},$$
and so
\begin{equation}\label{eq.2.10}
\begin{aligned}
\sum_{j=1}^l &(\sigma_j-|\lambda_j|) = \sum_{j=1}^l (\varepsilon_j-\delta_j) \\
&= \frac{\prod\limits_{j=1}^{l-1} (1+\delta_j) \sum\limits_{j=1}^l \varepsilon_j-\prod\limits_{j=1}^{l-1} (1+\delta_j) \sum\limits_{j=1}^{l-1} \delta_j
-\prod\limits_{j=1}^l (1+\varepsilon_j)+\prod\limits_{j=1}^{l-1} (1+\delta_j)}{\prod\limits_{j=1}^{l-1} (1+\delta_j)}.
\end{aligned}
\end{equation}
The first-order terms (i.e. those involving only one of $\varepsilon$'s or $\delta$'s) of the numerator cancel out:
\begin{equation*}
 \sum_{j=1}^l \varepsilon_j-\sum_{j=1}^{l-1} \delta_j
-\left(1+\sum_{j=1}^l \varepsilon_j\right)+\left(1+\sum_{j=1}^{l-1} \delta_j\right)=0.
\end{equation*}
Now note that by \eqref{e2.5}, $|\delta_j|\le |\varepsilon_1|+|\varepsilon_l|$. Using this and $|\varepsilon_j \varepsilon_k |\le (\varepsilon_j^2+\varepsilon_k^2)/2$ we can bound
all of the second-order terms (i.e. those with $\varepsilon_j\varepsilon_k$, $\varepsilon_j\delta_k$ and $\delta_j\delta_k$) by $\widetilde{c} \sum_{j=1}^l \varepsilon_j^2$, where $\widetilde{c}$ will depend on $l$ only. All of the higher-order terms can be taken care of by using $|\varepsilon_j|<1, |\delta_j|<1$ to reduce it to the second-order. Finally, the denominator of the right-hand side of~\eqref{eq.2.10} is bounded below by $1/2^l$. Therefore, we obtain
\begin{equation*}
\sum_{j=1}^l (\sigma_j-|\lambda_j|) \le c \sum_{j=1}^l \varepsilon_j^2 = c \sum_{j=1}^l (1-\sigma_j)^2,
\end{equation*}
which proves our lemma.
\end{proof}

\begin{lemma}\label{lm3} There exists a constant $c$ so that
\begin{equation}\label{eq2.8}
\norm{\bdone-A}\le c  \norm{\bdone-|A|}
\end{equation}
for any $l\times l$ matrix $A$ with real positive eigenvalues, where $c$ depends on $l$ only.
\end{lemma}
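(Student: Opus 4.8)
The plan is to reduce the bound on $\norm{\bdone - A}$ to a comparison between the eigenvalues of $A$ and its singular values, using the triangle inequality through a carefully chosen intermediate matrix. Write the polar decomposition $A = |A|U$ with $U = \phi(A)$ unitary, and also use the Schur (triangular) form $A = V^* T V$ with $V$ unitary and $T$ upper triangular having the eigenvalues $\lambda_1,\dots,\lambda_l$ of $A$ on its diagonal; since the eigenvalues are real and positive, $\abs{\lambda_j} = \lambda_j$. The first step is to observe that $\norm{\bdone - A} \le \norm{\bdone - |A|} + \norm{\,|A| - A\,} = \norm{\bdone - |A|} + \norm{\,|A|(\bdone - U)\,}$, so everything hinges on controlling $\norm{\bdone - U}$, i.e.\ on showing that when $A$ has real positive eigenvalues and $|A|$ is close to $\bdone$, then $U$ must be close to $\bdone$ as well.

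To estimate $\norm{\bdone - U}$, the plan is to compare the trace of $A$ computed two ways. On one hand, $\tr A = \sum_j \lambda_j = \sum_j \abs{\lambda_j}$, which is real. On the other hand, $A = |A|U$, so $\tr A = \tr(|A|U)$. I would split $\tr(|A|U) = \tr U + \tr((|A|-\bdone)U)$; the second term is $O(\norm{\bdone-|A|})$, while $\tr U = \sum_j e^{i\theta_j}$ where $e^{i\theta_j}$ are the eigenvalues of $U$. Taking real parts and rearranging, $\sum_j (1 - \cos\theta_j) = \sum_j (1 - \abs{\lambda_j}) + O(\norm{\bdone - |A|})$. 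Now I want a lower bound $\sum_j(1-\abs{\lambda_j}) \ge -\,c\,\norm{\bdone-|A|}$ (or better), which is exactly where Lemma~\ref{lemma} enters: that lemma gives $\sum_j(\sigma_j - \abs{\lambda_j}) \le c\sum_j(1-\sigma_j)^2 \le c'\norm{\bdone - |A|}^2$, and since $\sum_j(1-\sigma_j)$ can be bounded by $\norm{\bdone-|A|}$ up to a dimensional constant, we get $\sum_j(1-\abs{\lambda_j}) \le c''\norm{\bdone-|A|}$ (for $\norm{\bdone-|A|}$ small, say $\le 1/2$). Combining, $\sum_j(1-\cos\theta_j) \le C\norm{\bdone - |A|}$; since each summand is nonnegative and $1 - \cos\theta_j \asymp \theta_j^2$ for $\theta_j$ small, this forces $\sum_j \theta_j^2 \le C'\norm{\bdone-|A|}$, hence $\norm{\bdone - U}_{HS}^2 = \sum_j \abs{1 - e^{i\theta_j}}^2 \asymp \sum_j\theta_j^2 \le C'\norm{\bdone-|A|}$.

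This last inequality is weaker than linear — it only yields $\norm{\bdone - U} \lesssim \norm{\bdone-|A|}^{1/2}$ — which is not good enough. The fix, and what I expect to be the main obstacle, is to bootstrap: once we know $U$ is close to $\bdone$, so that all the $\theta_j$ are small, we can extract the \emph{imaginary} part of $\tr(|A|U)$ as well. Since $\tr A$ is real, $\imag \tr U = -\imag \tr((|A|-\bdone)U) = O(\norm{\bdone-|A|})$, i.e.\ $\sum_j \sin\theta_j = O(\norm{\bdone-|A|})$. This is only a constraint on the sum of the $\theta_j$'s to first order, so by itself it still does not pin down each $\theta_j$ linearly. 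The genuinely delicate point is therefore to exploit the triangular structure: writing $A = V^*TV$, the relation between the diagonal of $T$ (the $\lambda_j$) and the singular values of $A$ involves the strictly-upper-triangular part of $T$, and one shows $\sum_{j<k}\abs{T_{jk}}^2 = \sum_j \sigma_j^2 - \sum_j\abs{\lambda_j}^2 = O(\norm{\bdone-|A|})$ after using $\sum\sigma_j^2 = \norm{A}_{HS}^2$ and the already-established $\sum(1-\abs{\lambda_j}) = O(\norm{\bdone-|A|})$. Thus the off-diagonal mass of $T$ is $O(\norm{\bdone-|A|}^{1/2})$, and combined with $T_{jj} = \lambda_j$ real and $\abs{1-\lambda_j} = O(\norm{\bdone-|A|})$, this shows $\norm{\bdone - T} = O(\norm{\bdone-|A|}^{1/2})$, hence $\norm{\bdone - A} = \norm{V^*(\bdone-T)V} = O(\norm{\bdone-|A|}^{1/2})$ — still a square root.

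Since the square-root bound keeps reappearing, the cleanest route is likely to abandon the trace computations for the final estimate and instead argue by a compactness/normalization reduction exactly as in the proof of Lemma~\ref{lemma}: the inequality $\norm{\bdone-A} \le c\norm{\bdone-|A|}$ is trivially true on any compact set of matrices (with real positive eigenvalues) on which $\norm{\bdone-|A|}$ does not vanish, and for large $\norm A$ it is clear, so one only needs it in a neighborhood of the set $\{\norm{\bdone-|A|}=0\} = \{A \text{ unitary with real positive eigenvalues}\} = \{\bdone\}$. Near $\bdone$, write $A = \bdone + X$ with $X$ small; then $|A| = \bdone + \tfrac12(X+X^*) + O(\norm X^2)$, so $\norm{\bdone - |A|} \ge \tfrac12\norm{X+X^*} - O(\norm X^2)$, and the content reduces to: if $A = \bdone+X$ has real eigenvalues, then $\norm X \le c\norm{X+X^*}$ for small $X$. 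Equivalently, the skew-Hermitian part $\tfrac12(X-X^*)$ must be controlled by the Hermitian part. This is where reality of the spectrum is used decisively: $X$ has eigenvalues $\lambda_j - 1$, all real, and a matrix whose Hermitian part is small in norm but whose skew-Hermitian part is of order $1$ has eigenvalues with imaginary parts of order $1$ (by a standard perturbation/field-of-values estimate, $\imag \lambda_j(X)$ lies in the numerical range of the skew part), contradiction. Making the implication "$\imag\spec(X)=0$ and $\norm{X+X^*}$ small $\Rightarrow$ $\norm{X-X^*}$ small" quantitative and uniform in a neighborhood — via, say, the Bauer–Fike or Henrici departure-from-normality bounds, or simply a continuity/compactness argument on the unit sphere of skew-Hermitian directions — is the one step I expect to require real care, but it is finite-dimensional and $l$-dependent as claimed, and it delivers the linear bound with the dimensional constant $c$.
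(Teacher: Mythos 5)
Your final route (the compactness/linearization argument) is correct, but it is genuinely different from the paper's proof, and your first two trace-based attempts are, as you yourself note, dead ends that only yield square-root bounds. The paper's proof is a two-line refinement of exactly the kind of trace computation you were circling: working in the Hilbert--Schmidt norm, one expands $\norm{\bdone-A}_{HS}^2=l-2\sum_j\real\lambda_j+\tr AA^*=l-2\sum_j\lambda_j+\sum_j\sigma_j^2$ (using that the eigenvalues are real) and $\norm{\bdone-|A|}_{HS}^2=l-2\sum_j\sigma_j+\sum_j\sigma_j^2$, so that $\norm{\bdone-A}_{HS}^2\le M\norm{\bdone-|A|}_{HS}^2$ is \emph{equivalent} to $2\sum_j(\sigma_j-\lambda_j)\le(M-1)\sum_j(1-\sigma_j)^2$, which is precisely Lemma~\ref{lemma}; this yields an explicit constant and explains why Lemma~\ref{lemma} was stated in the first place. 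Your approach bypasses Lemma~\ref{lemma} entirely at the cost of a non-constructive constant: the reduction to a neighborhood of $\bdone$ is sound (positivity of the eigenvalues is what forces the zero set of $\norm{\bdone-|A|}$ to be $\{\bdone\}$ --- for merely real eigenvalues the lemma fails, e.g.\ $A=-\bdone$), and the step you flag as delicate, namely $\norm{X}\le c\norm{X+X^*}$ for $X$ with real spectrum, does hold: either by your homogeneity-plus-compactness argument (if $\norm{X_n}=1$, $X_n+X_n^*\to\bdnot$, then any limit is skew-Hermitian with real spectrum, hence $\bdnot$, a contradiction), or, more explicitly, by writing $X=H+iK$ with $H,K$ Hermitian and noting that $\tr X^2=\sum_j\lambda_j^2\ge0$ forces $\tr K^2\le\tr H^2$, i.e.\ $\norm{K}_{HS}\le\norm{H}_{HS}$, whence $\norm{X}_{HS}\le\norm{X+X^*}_{HS}$. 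Either way the argument closes, so your proof is valid; it is softer and arguably more conceptual, while the paper's is shorter, explicit, and tightly coupled to the singular-value inequality of Lemma~\ref{lemma}.
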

\begin{proof}

By the equivalence of norms, we can prove \eqref{eq2.8} for the Hilbert--Schmidt norm instead. Let $\lambda_1\ge\ldots \ge \lambda_l>0$ be the eigenvalues of $A$, and let $\sigma_1\ge\ldots\ge \sigma_l>0$ be the singular values of $A$. Note
\begin{equation*}
\begin{aligned}
\norm{\bdone-A}_{HS}^2=\tr\left[(\bdone-A)(\bdone-A)^*\right]&=l-2\sum_{j=1}^l \real\lambda_j+\sum_{j=1}^l \tr AA^* \\
&= l-2\sum_{j=1}^l \lambda_j+\sum_{j=1}^l\sigma_j^2,\\
\norm{\bdone-|A|}_{HS}^2=\tr\left[(\bdone-|A|)^2\right]&=l-2\sum_{j=1}^l \sigma_j+\sum_{j=1}^l \sigma_j^2,
\end{aligned}
\end{equation*}
and so $\norm{\bdone-A}_{HS}^2 \le M \norm{\bdone-|A|}_{HS}^2$ holds if and only if
\begin{equation*}\label{eq.2.7}
2\sum_{j=1}^l (\sigma_j-\lambda_j) \le (M-1) \sum_{j=1}^l (1-\sigma_j)^2.
\end{equation*}
Since $\lambda_j=|\lambda_j|$, the previous lemma proves the result.
\end{proof}

\begin{proof}[Proof of Theorem~\ref{thm2}]
As in Theorem \ref{thm1}, let $\widehat{A}_n$ be of type $1$, and $A_n$ of type $2$ with the equivalence~\eqref{eq2.5}. Then keeping the notation of Theorem \ref{thm1} and using Lemma~\ref{li}, we have
\begin{multline*}\label{eq2.7}
\begin{aligned}
\sum_{n=1}^\infty \norm{\sigma_n&-\sigma_{n+1}}_{HS}= \sum_{n=1}^\infty \norm{\phi(\widehat{Q}_n)-\phi(\widehat{Q}_{n+1})}_{HS} \\
&\le \sum_{n=1}^\infty \sqrt{||\bdone-{\widehat{A}_{n+1}}^{-1}||_{HS}^2+||\bdone-\widehat{A}_{n+1}||_{HS}^2} \\
&\le \sum_{n=1}^\infty ||\bdone-{\widehat{A}_{n+1}}^{-1}||_{HS}+\sum_{n=1}^\infty ||\bdone-\widehat{A}_{n+1}||_{HS} \\
&\le (\sup_{n}||\widehat{A}_n||_{HS}+1) \sum_{n=1}^\infty ||\bdone-\widehat{A}_{n+1}||_{HS} \\
&\le (\sup_{n}||\widehat{A}_n||_{HS}+1) \sup_n{||(\bdone+\widehat{A}_n)^{-1}||_{HS}} \sum_{n=1}^\infty  ||\bdone-\widehat{A}_{n+1}^2||_{HS} <\infty,
\end{aligned}
\end{multline*}
since $\widehat{A}_n\to\bdone$ and so $\sup_{n}||\widehat{A}_n||_{HS}<\infty$, $\sup_n{||(\bdone+\widehat{A}_n)^{-1}||_{HS}}<\infty$.

This implies that $\sigma_n$ is Cauchy, and so converges.

\smallskip

An alternative indirect way of proving that type $1$ and type $2$ are asymptotic to each other  is as follows: it is proven in \cite{Kozhan} that under condition \eqref{eq2.6} Szeg\H{o} asymptotics  for the type $2$ block Jacobi matrix holds. In \cite{Kozhan2} the same fact is obtained for the type $1$ Jacobi matrix. Therefore~\eqref{eq2.2} implies that the limit $\lim_{n\to\infty} \sigma_n$ exists.

\medskip

Now assume that $\widehat{A}_n$ is of type $1$, and $A_n$ of type $3$ with the equivalence~\eqref{eq2.5}. Since all eigenvalues of $A_n$ are real and positive, Lemma~\ref{lm3} applies, and we get
\begin{equation*}
\sum_{n=1}^\infty \norm{\bdone-A_n}\le  c \sum_{n=1}^\infty \norm{\bdone-|A_n| } = c \sum_{n=1}^\infty \norm{\bdone-\widehat{A}_n }
\end{equation*}
since $|A_n|=\sigma_n^* \widehat{A}_n \sigma_n$ by~\eqref{eq2.5}.
Now $\sum_{n=1}^\infty ||\bdone-\widehat{A}_n||\le \sup_n ||(\bdone+\widehat{A}_n)^{-1}||\sum_{n=1}^\infty \norm{\bdone-\widehat{A}_n^2} <\infty$ which implies
\begin{multline*}
\begin{aligned}
\sum_{n=1}^\infty \norm{\sigma_n-\sigma_{n+1}} = \sum_{n=1}^\infty \norm{\bdone-\sigma_n^*\sigma_{n+1}} &\le \sum_{n=1}^\infty \norm{\bdone-A_n} + \sum_{n=1}^\infty \norm{A_n-\sigma_n^*\sigma_{n+1}} \\
&= \sum_{n=1}^\infty \norm{\bdone-A_n} + \sum_{n=1}^\infty \norm{\widehat{A}_n-\bdone} <\infty.
\end{aligned}
\end{multline*}
This shows that $\sigma_n$ is Cauchy, and so converges.
\end{proof}
\end{section}

\begin{example}
Let $D_k=\left( \begin{smallmatrix} (k+1)/k & 0 \\ 0 & 1 \end{smallmatrix} \right)$ for $k\ge 1$. Note that $D_k\to\bdone$.

Pick some unitary $\tau$, and define the sequence $\widehat{A}_n$ as follows: $\widehat{A}_1=\tau ^* D_1 \tau$, $\widehat{A}_2=D_1$, $\widehat{A}_3=D_1^{-1}$, $\widehat{A}_4=D_2$, $\widehat{A}_5=D_3$, $\widehat{A}_6=D_3^{-1}$, $\widehat{A}_7=D_2^{-1}$, $\widehat{A}_8=D_4$, and so on: we define $\widehat{A}_k$'s for $2^j \le k < 2^{j+1}$ in terms of further and further chunks of sequence $D_k$ as
\begin{align*}
&\widehat{A}_{2^j}=D_{2^{j-1}}, \ldots, \widehat{A}_{3\cdot2^{j-1}-1}=D_{2^{j}-1}, \\
&\widehat{A}_{3\cdot 2^{j-1}}=D_{2^{j}-1}^{-1}, \ldots, \widehat{A}_{2^{j+1}-1}=D_{2^{j-1}}^{-1}.
\end{align*}
Note that $\widehat{A}_n>0$, i.e. the sequence corresponds to a block Jacobi matrix of type $1$. Using the notation from Section 2, let $\widehat{Q}_n=\widehat{A}_1\ldots \widehat{A}_n$. Then
\begin{equation*}
\widehat{Q}_{2^j-1}=\widehat{A}_1, \quad \widehat{Q}_{3 \cdot 2^{j-1}-1}=\widehat{A}_1 D_{2^{j-1}} \ldots D_{2^{j}-1}=\widehat{A}_1 D_1,
\end{equation*}
and $\sigma_{2^j}=\phi(\widehat{Q}_{2^j-1})^*=\bdone$, $\sigma_{3 \cdot 2^{j-1}}=\phi(\widehat{Q}_{3 \cdot 2^{j-1}-1})^*=\phi(\tau ^* D_1 \tau D_1)^*$. Now choose $\tau$ such that $\phi(\tau ^* D_1 \tau D_1)$ is not positive definite. This gives that $\lim_{n\to\infty}\sigma_n$ doesn't exist, i.e. type $1$ and type $2$ are not asymptotic to each other.

Of course, the reason is that \eqref{eq2.6} fails here: $\sum \norm{\bdone-A_n A_n^*}$ diverges as $\sum \frac1n$.
\end{example}

{\appendix

\begin{section}{Proof of Li's lemma}
\begin{proof}[Proof of Lemma \ref{li}]
Let $B=U \Sigma V^*$ and $BD=\widetilde{U} \widetilde{\Sigma} \widetilde{V}^*$ be the singular value decompositions of $B$ and $BD$ (i.e. $U, \widetilde{U}, V, \widetilde{V}$ are unitary, and $\Sigma, \widetilde{\Sigma}$ are positive and diagonal). Denote
\begin{equation*}
\begin{aligned}
Y&=\widetilde{U}^*(B-BD)V=\widetilde{U}^*U\Sigma-\widetilde{\Sigma}\widetilde{V}^*V, \\
Z&=U^*(B-BD)\widetilde{V}=\Sigma V^*\widetilde{V} -U^*\widetilde{U} \widetilde{\Sigma}.
\end{aligned}
\end{equation*}
Then
\begin{equation}\label{a2}
Y-Z^*=(\widetilde{U}^* U-\widetilde{V}^* V)\Sigma+\widetilde{\Sigma}(\widetilde{U}^* U-\widetilde{V}^* V) = X\Sigma+\widetilde{\Sigma}X,
\end{equation}
where $X=\widetilde{U}^* U-\widetilde{V}^* V$. On the other hand,
\begin{equation}\label{a3}
\begin{aligned}
Y-Z^*&=\widetilde{U}^*(B-BD)V-\widetilde{V}^*(B^*-D^*B^*)U \\
&= \widetilde{\Sigma} \widetilde{V}^* (D^{-1}-\bdone) V -\widetilde{V}^*(\bdone-D^*)V \Sigma =
\widetilde{\Sigma} E -F \Sigma,
\end{aligned}
\end{equation}
where $E=\widetilde{V}^* (D^{-1}-\bdone) V$, $F=\widetilde{V}^*(\bdone-D^*)V$. Note that $\Sigma$ and $\widetilde{\Sigma}$ are diagonal, and therefore, the solution of \eqref{a2}=\eqref{a3} is
\begin{equation*}
x_{ij}= \frac{\widetilde{\sigma}_{ii}e_{ij}-f_{ij}\sigma_{jj}}{\sigma_{jj}+\widetilde{\sigma}_{ii}},
\end{equation*}
where $X\equiv(x_{ij})$, $E\equiv(e_{ij})$, $F\equiv(f_{ij})$, $\Sigma\equiv(\sigma_{ij})$, $\widetilde{\Sigma}\equiv(\widetilde{\sigma}_{ij})$. Note that $\sigma_{jj}>0$ and $\widetilde{\sigma}_{ii}>0$, and thus by the Schwarz inequality,
\begin{equation*}
|x_{ij}|^2\le \frac{\sigma_{jj}^2+\widetilde{\sigma}_{ii}^2}{(\sigma_{jj}+\widetilde{\sigma}_{ii})^2} (|e_{ij}|^2+|f_{ij}|^2)\le |e_{ij}|^2+|f_{ij}|^2,
\end{equation*}
which implies
\begin{equation*}
||X||^2_{HS}\le ||E||^2_{HS}+||F||^2_{HS}=||\bdone-D^{-1}||^2_{HS}+||\bdone-D||^2_{HS}.
\end{equation*}
Finally, note that $\phi(B)=UV^*$ and $\phi(BD)=\widetilde{U}\widetilde{V}^*$, so $||\phi(B)-\phi(BD)||_{HS}=||\widetilde{U} X V^*||_{HS}=||X||_{HS}$, and we are done.
\end{proof}
\end{section}


\bibliographystyle{amsplain}

\end{document}